\DeclareFontFamily{U}{mathx}{\hyphenchar\font45}
\DeclareFontShape{U}{mathx}{m}{n}{
      <5> <6> <7> <8> <9> <10>
      <10.95> <12> <14.4> <17.28> <20.74> <24.88>
      mathx10
      }{}
\DeclareSymbolFont{mathx}{U}{mathx}{m}{n}
\DeclareMathAccent{\widecheck}{0}{mathx}{"71}
\DeclareMathAccent{\wideparen}{0}{mathx}{"75}
\newtheorem{theorem}{Theorem}[section]
\newtheorem{lemma}[theorem]{Lemma}
\newtheorem{corollary}[theorem]{Corollary}
\newtheorem{proposition}[theorem]{Proposition}
\theoremstyle{remark}
\newtheorem*{tha*}{\emph{\textbf{Theorem A}}}
\newtheorem*{thb*}{\emph{\textbf{Theorem B}}}
\newtheorem*{remark*}{Remark}
\theoremstyle{definition}
\newtheorem{problem}[theorem]{Problem}
\numberwithin{equation}{section}
\newcommand{\vertiii}[1]{{\left\vert\kern-0.25ex\left\vert\kern-0.25ex\left\vert #1 
    \right\vert\kern-0.25ex\right\vert\kern-0.25ex\right\vert}}
\newcounter{smallromans}
\newenvironment{romanenumerate}
{\begin{list}{{\normalfont\textrm{(\roman{smallromans})}}}%
  {\usecounter{smallromans}\setlength{\itemindent}{0cm}%
   \setlength{\leftmargin}{5.5ex}\setlength{\labelwidth}{5.5ex}%
   \setlength{\topsep}{.5ex}\setlength{\partopsep}{.5ex}%
   \setlength{\itemsep}{0.1ex}}}%
{\end{list}}
\newcounter{smallromansdash}
\newcounter{bigromans} 
  {\end{list}}
\begin{document}
\date{\today}
\title[Unital Banach algebras not isomorphic to Calkin algebras]{Unital Banach algebras not isomorphic to Calkin algebras of separable Banach spaces}

\author{Bence Horv\'ath}
\address{Institute of Mathematics, Czech Academy of Sciences, \v{Z}itn\'{a} 25, 115~67 Prague 1, Czech Republic}
\email{horvath@math.cas.cz}

\author{Tomasz Kania}
\address{Institute of Mathematics, Czech Academy of Sciences, \v{Z}itn\'{a} 25, 115~67 Prague 1, Czech Republic and Institute of Mathematics, Jagiellonian University, {\L}ojasiewicza 6, 30-348 Krak\'{o}w, Poland}
\date{\today}
\email{kania@math.cas.cz, tomasz.marcin.kania@gmail.com}

\subjclass[2010]{47L10 (primary) and 46B03 (secondary)} 
\thanks{The authors acknowledge with thanks support received from GA\v{C}R project 19-07129Y; RVO 67985840.}
\keywords{Calkin algebra, group algebra, separable Banach space, unital Banach algebra}

\begin{abstract}Recent developments in Banach space theory provided unexpected examples of unital Banach algebras that are isomorphic to Calkin algebras of Banach spaces, however no example of a unital Banach algebra that cannot be realised as a~Calkin algebra has been found so far. This naturally led to the question of possible limitations of such assignments. In the present note we provide examples of unital Banach algebras meeting the necessary density condition for being the Calkin algebra of a separable Banach space that are not isomorphic to Calkin algebras of such spaces, nonetheless. The examples may be found of the form $C(X)$ for a compact space $X$, $\ell_1(G)$ for some torsion-free Abelian group, and a~simple, unital AF $C^*$-algebra. Extensions to higher densities are also presented.\end{abstract}
\maketitle

\section{Introduction and the main result}
Let $E$ be a Banach space. Denote by $\mathscr{B}(E)$ the Banach algebra of all (bounded, linear) operators on $E$, by $\mathscr{F}(E)$ the subalgebra of all finite-rank operators, by $\mathscr{A}(E)$ the norm-closure of $\mathscr{F}(E)$ (the \emph{approximable operators}), and finally by $\mathscr{K}(E)$ the space of all compact operators on $E$. Each aforementioned subspace is also a two-sided ideal of $\mathscr{B}(E)$. When $E$ is a Hilbert space (or more generally, $E$ has the Approximation Property) $\mathscr{A}(E) = \mathscr{K}(E)$ but in general $\mathscr{K}(E)$ may be strictly bigger. As the two ideals may be different, it is not clear what is the right definition of a Calkin algebra on a Banach space. In the present note we fix $\mathscr{Q}$ to be one of the possible assignments:
\begin{equation}\label{calkin}E \mapsto \mathscr{B}(E) / \mathscr{A}(E)\;\;\text{ or } \;\; E \mapsto \mathscr{B}(E) / \mathscr{K}(E)\;\;\;\;(E\text{ is a Banach space}),\end{equation}
and having done so we call $\mathscr{Q}(E)$ \emph{the} Calkin algebra of $E$.\smallskip

Calkin algebras of classical (infinite-dimensional) Banach spaces tend to be very large in the sense that not only are they non-separable, but they do not even linearly inject into $\ell_\infty$ as Banach spaces.\smallskip %(a more precise statement of this fact may be found in Proposition~\ref{nonSC}, which is a far reaching generalisation of the fact that $\mathscr{Q}(\ell_2)$ is not representable on a~separable Hilbert space). \smallskip 

The construction of the Argyros--Haydon space (\cite{ArgyrosHaydon:2011}) whose Calkin algebra is one-di\-men\-sio\-nal (the space has a basis so both assignments from \eqref{calkin} yield the same algebra) opened a zoo of new examples of Calkin algebras that \emph{are} separable. Such examples include finite direct sums of full matrix algebras of arbitrary dimensions (\cite[p.~4]{KaniaLaus}), $C(X)$, the algebra of continuous functions on arbitrary countable compact space $X$ (\cite{motakisetal}), the algebra of upper triangular matrices (\cite{KaniaLaus}), $\ell_1(\mathbb N_0)$ with convolution (\cite[Chapter IV]{Tarbard:2013}), the bidual of the $p$\textsuperscript{th} James space ($p\in (1,\infty)$) with pointwise multiplication (\cite[Theorem III]{Motakispuglisi}), and more. \smallskip

It is no wonder that the question:
\begin{quote}\emph{Which Banach algebras are isomorphic to Calkin algebras (in either sense) of Banach spaces?}
\end{quote}
was raised by several researchers. We thank the referee for pointing out that this question was recorded in \cite[p.~134]{Tarbard:2013}. It was also asked during the Thematic Program on Abstract Harmonic Analysis, Banach and Operator Algebras held in the Fields Institute, Toronto, in 2014.\smallskip

Since $\mathscr{B}(E)$ is always unital, a necessary condition for a Banach algebra to be isomorphic to a Calkin algebra is to have a unit. Up to now, it is the \emph{only} necessary condition known, so potentially every unital Banach algebra is realisable as a Calkin algebra. When we restrict to the class of Banach spaces of fixed density $\lambda$ (for example, separable Banach spaces; $\lambda = \omega$) another trivial necessary condition is available: considered unital Banach algebras cannot have density bigger than $2^\lambda$ because the density of $\mathscr{B}(E)$ is bounded by $2^\lambda$ (see Lemma~\ref{density}), thus of every quotient thereof.\smallskip

The aim of this note is to demonstrate that in this restricted setting there exist Banach algebras satisfying the two necessary conditions with respect to a fixed density of Banach spaces and belonging to familiar classes of algebras, yet they cannot be realised as Calkin algebras for this class. The idea is based on the pigeon-hole principle: for a fixed cardinal $\lambda$ there are at most $2^{\lambda}$ isomorphism-types of Calkin algebras of density $2^{\lambda}$ (by Lemma~\ref{lem:operatoralgebra}), but there are precisely $2^{2^\lambda}$ isomorphism-types of Banach algebras of density $2^{\lambda}$ (see Lemma~\ref{lem:howmany}). Hence, we focus on listing natural classes from which \emph{almost all} algebras are not realisable as Calkin algebras.
%despite the fact that it is known that certain algebras from these classes are isomorphic to Calkin algebras.

\begin{theorem}
Let $\lambda$ be an infinite cardinal number. Then there exists a unital Banach algebra $\mathscr{A}$ of density at most $2^\lambda$ that is not isomorphic to a Calkin algebra of any Banach space of density $\lambda$. Moreover, $\mathscr{A}$ may take each of the following forms:
\begin{romanenumerate}
    \item\label{C1} $\mathscr{A} = C(X)$ for some compact space $X$, which may be taken either to be
    \begin{itemize}
        \item an ordinal,
        \item a scattered Fr\'echet--Urysohn space,
        \item connected Abelian group, or
        \item when $\lambda = \omega$, separable, extremally disconnected, and rigid (that is, the identity map is the only auto-homeomorphism).
    \end{itemize} 
    \item $\mathscr{A}=\mathscr{K}(E)^\#$, the unitisation of $\mathscr{K}(E)$ for some Banach space $E$ of the form $C(X)$.
    \item $\mathscr{A}$ is a simple AF $C^*$-algebra.
    \item $\mathscr{A} = \ell_1(G)$ for some torsion-free Abelian group.
%    \item \textcolor{green}{$\mathscr{A} = \ell_1(G)$ for some group $G$.}
\end{romanenumerate}
\end{theorem}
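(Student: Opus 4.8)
The plan is to run a single pigeon-hole argument in four guises. By Lemma~\ref{lem:operatoralgebra} there are at most $2^\lambda$ isomorphism-types of Calkin algebras $\mathscr{Q}(E)$ with $E$ of density $\lambda$, whereas by Lemma~\ref{lem:howmany} there are $2^{2^\lambda}$ isomorphism-types of Banach algebras of density $2^\lambda$. Hence, for each listed form it suffices to exhibit a family of cardinality strictly greater than $2^\lambda$ consisting of pairwise non-isomorphic unital Banach algebras of that form and of density at most $2^\lambda$: the pigeon-hole principle then forces all but at most $2^\lambda$ of them to fail to be Calkin algebras, so in particular at least one does. In each case the work splits into (a) isolating an isomorphism invariant that reduces ``non-isomorphic as Banach algebras'' to a combinatorial count, and (b) producing more than $2^\lambda$ objects of density at most $2^\lambda$ realising distinct values of that invariant.

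For \romanref{C1} the relevant invariant is the Gelfand spectrum: a unital Banach-algebra isomorphism $C(X)\to C(Y)$ induces a homeomorphism of character spaces, and the character space of $C(X)$ is $X$, so $C(X)\cong C(Y)$ forces $X\cong Y$. It therefore suffices to find, in each topological subclass, more than $2^\lambda$ pairwise non-homeomorphic compacta of weight at most $2^\lambda$ (recall $\mathrm{dens}\,C(X)=w(X)$ for infinite $X$). For ordinals I would note that the intervals $[0,\alpha]$ with $|\alpha|\le 2^\lambda$ fall into $(2^\lambda)^+$ homeomorphism classes by the Cantor--Bendixson classification, each of weight $\le 2^\lambda$. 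For connected Abelian groups I would invoke Pontryagin duality: a discrete torsion-free Abelian group $G$ has compact connected dual $\widehat G$ of weight $|G|$, and $\check H^1(\widehat G;\mathbb Z)\cong G$; since \v{C}ech cohomology is a homeomorphism invariant, non-isomorphic $G$ give non-homeomorphic $\widehat G$, and as there are $2^{2^\lambda}$ torsion-free Abelian groups of cardinality $2^\lambda$ this yields $2^{2^\lambda}$ types. The scattered Fr\'echet--Urysohn case I would handle by an almost-disjoint-family (Mr\'owka-type) construction on $2^\lambda$ isolated points: the Fr\'echet--Urysohn requirement forces each accumulation point to have countable character (capping the Cantor--Bendixson rank below $\omega_1$ and making the ``branches'' genuine convergent sequences), and distinct almost disjoint structures produce non-homeomorphic spaces, more than $2^\lambda$ in number. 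The separable, extremally disconnected, rigid case ($\lambda=\omega$) I would obtain from suitable subspaces of $\beta\mathbb N$, of which there are $2^{\mathfrak c}=2^{2^\lambda}$ pairwise non-homeomorphic ones.

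Case (iv) runs on the same engine as the connected-group subcase: for Abelian $G$ the character space of $\ell_1(G)$ is again $\widehat G$, so $\ell_1(G)\cong\ell_1(H)$ forces $\widehat G\cong\widehat H$ and hence, via $\check H^1$, $G\cong H$; the $2^{2^\lambda}$ torsion-free Abelian groups of cardinality $2^\lambda$ then give $2^{2^\lambda}$ pairwise non-isomorphic $\ell_1(G)$, each of density $2^\lambda$. For (ii) I would first recover $\mathscr{K}(E)$ from its unitisation as the kernel of its unique character, then use the classical fact that an algebra isomorphism $\mathscr{K}(E)\cong\mathscr{K}(F)$ forces $E\cong F$ or $E\cong F^*$ as Banach spaces; taking $E=C([0,\alpha])$ over ordinals (so that $E^*=\ell_1([0,\alpha])$ is never again a $C(K)$-space and cannot create collisions) and invoking the isomorphic classification of $C(K)$-spaces over ordinal intervals produces more than $2^\lambda$ non-isomorphic $\mathscr{K}(C([0,\alpha]))^\#$, all of density $\le 2^\lambda$ because ordinals are scattered. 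For (iii) the invariant is Elliott's ordered $K_0$: two AF algebras are isomorphic iff their dimension groups are, so it remains to exhibit more than $2^\lambda$ simple dimension groups with order unit of cardinality $\le 2^\lambda$, which a transfinite construction (for instance strictly ordered subgroups of products of copies of $\mathbb R$, or Effros--Handelman--Shen realisation applied to the $2^{2^\lambda}$ torsion-free Abelian groups of cardinality $2^\lambda$) supplies.

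The main obstacle is twofold. First, one must check that each invariant genuinely descends from Banach-algebra isomorphism rather than mere isometry: the spectrum argument is routine, but the reduction of homeomorphism of $\widehat G$ to isomorphism of $G$ through \v{C}ech cohomology, and the structure theorem recovering $E$ from $\mathscr{K}(E)$, both demand care. Second, and more seriously, one must make the counts robustly exceed $2^\lambda$ in ZFC. The ordinal and group bounds $(2^\lambda)^+$ and $2^{2^\lambda}$ are clean, but the scattered Fr\'echet--Urysohn subcase is delicate precisely because the Fr\'echet--Urysohn property forbids points of uncountable character (ruling out the long ordinals and any single accumulation point over an uncountable set), so the abundance must come from the almost disjoint structure rather than from the Cantor--Bendixson cardinal sequence, which only takes $2^\lambda$ values. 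The AF subcase is the other sensitive point: for $\lambda=\omega$ the separable simple unital AF algebras already number only $\mathfrak c=2^\lambda$, so one is forced into the non-separable regime, and hence into verifying that $K_0$ remains a complete invariant for non-separable AF algebras and that enough large simple dimension groups exist.
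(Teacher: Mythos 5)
Your global strategy coincides with the paper's: bound the number of isomorphism types of Calkin algebras of spaces of density $\lambda$ by $2^\lambda$, and then exhibit inside each class strictly more pairwise non-isomorphic unital algebras of density at most $2^\lambda$. The ordinal part of (i) and case (ii) essentially reproduce the paper's argument: the paper distinguishes the spaces $C[0,\alpha]$ by the Szlenk index (Corollary~\ref{cor:manytypes}) where you use Cantor--Bendixson invariants plus Gelfand duality --- either works --- and your handling of (ii), recovering $\mathscr{K}(E)$ as the kernel of the unique character of the unitisation and excluding the dual alternative because $\ell_1([0,\beta])$ is weakly sequentially complete while no infinite-dimensional $C(K)$ is, is if anything more careful than the paper's one-line remark (the paper uses Chernoff's theorem, Lemma~\ref{lem:operatoralgebra}, which gives $E\cong F$ outright). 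In (iv) and the connected-group part of (i) you replace the paper's citations (Orsatti--Rodin\`o, Hewitt--Ross) by Pontryagin duality plus $\check H^1(\widehat G;\mathbb Z)\cong G$ and the classical count of $2^{\kappa}$ non-isomorphic torsion-free Abelian groups of cardinality $\kappa$; this is a correct alternative route, modulo supplying those two (nontrivial but standard) references.

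There are, however, genuine gaps. The most serious is the scattered Fr\'echet--Urysohn subcase of (i). Your claim that the Fr\'echet--Urysohn property forces accumulation points of countable character is false: the one-point compactification of $D(\omega_1)$ is a scattered Fr\'echet--Urysohn compactification with a point of character $\omega_1$. Worse, the Mr\'owka-type plan breaks as stated: over a \emph{maximal} almost disjoint family the one-point compactification of the $\Psi$-space is not Fr\'echet--Urysohn at $\infty$, because every injective sequence of isolated points has a subsequence meeting some member $A$ of the family infinitely, hence converging to the point $A$, so no sequence of isolated points converges to $\infty$ although $\infty$ lies in their closure; over a non-maximal family one must simultaneously verify the Fr\'echet--Urysohn property for \emph{arbitrary} subsets and produce more than $2^\lambda$ pairwise non-homeomorphic such compactifications. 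That is precisely the nontrivial content of Bashkirov's theorem (Proposition~\ref{Bashkirov}(i)) which the paper invokes and which your sketch does not reconstruct. The same criticism applies, more mildly, to the extremally disconnected rigid subcase, where ``suitable subspaces of $\beta\mathbb N$'' conceals the entire Dow--Gubbi--Szyma\'nski theorem (rigidity is the whole difficulty), though as a citation this one is legitimate.

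In the AF case your instinct is right that completeness of ordered $K_0$ is not available for nonseparable AF algebras, but your plan still leans on it (``two AF algebras are isomorphic iff their dimension groups are''). In fact only the easy direction is needed: ordered $K_0$ with order unit is invariant under bounded algebra isomorphisms (idempotents map to idempotents), which incidentally also disposes of the Banach-versus-$C^*$-isomorphism issue that the paper settles with Gardner's theorem. What your sketch then leaves unbuilt is the realization step: a nonseparable Effros--Handelman--Shen theorem together with a stock of more than $2^\lambda$ non-isomorphic \emph{simple} dimension groups of cardinality at most $2^\lambda$ (note that Archimedean totally ordered ones embed in $\mathbb R$ by H\"older's theorem, so have size at most $\mathfrak c$ and cannot suffice for large $\lambda$; and ``strictly ordered subgroups of products of copies of $\mathbb R$'' still require verification of Riesz interpolation and simplicity). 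The paper avoids all of this wholesale by citing Farah--Katsura for $2^\kappa$ pairwise non-isomorphic simple AF algebras of density character $\kappa$ for every uncountable $\kappa$, plus Gardner. So: same engine as the paper throughout, two cases genuinely incomplete (scattered Fr\'echet--Urysohn, AF realization), the rest correct up to citations.
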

It is quite remarkable that, by \cite{motakisetal}, for every countable ordinal $\alpha$ the algebra $C[0,\alpha]$ is isomorphic to a Calkin algebra of a separable Banach space; scatteredness of $[0,\alpha]$ translates into purely algebraic properties of the algebra of continuous functions (generation by minimal idempotents), hence it follows that likely there is no hope for realisability of algebraically defined classes of algebras as Calkin algebras at once. Of course, the main result of this note does not prevent the found counterexamples to be Calkin algebras of Banach spaces of higher densities. Thus the question of whether they are realisable as Calkin algebras arises. Let us record the natural problem of realisability of certain classes of unital Banach algebras as Calkin algebras.
\begin{problem}
Let $\lambda$ be an infinite cardinal number. Identify classes of unital Banach algebras (of density at most $2^\lambda$) that are \emph{not} isomorphic to Calkin algebras of Banach spaces of density $\lambda$.
\end{problem}

\section{Preliminaries and auxiliary results} In this note we consider Banach spaces over a fixed field of real or complex numbers. Let $E$ be a Banach space. A closed subalgebra of $\mathscr{B}(E)$ containing $\mathscr{F}(E)$ is called a \emph{standard operator algebra over} $E$. Eidelheidt's theorem asserts that two Banach spaces $E$ and $F$ are isomorphic if and only if $\mathscr{B}(E)$ and $\mathscr{B}(F)$ are isomorphic as rings. We shall require the following extension of Eidelheidt's theorem to standard operator algebras (\cite[Corollary 3.2]{Chernoff:1973}; see also \cite{AsadiKhosravi:2006}).
\begin{lemma}\label{lem:operatoralgebra}Let $\mathscr{A}_1, \mathscr{A}_2$ be two standard operator algebras over Banach spaces $E$ and $F$, respectively. If $\mathscr{A}_1$ and $\mathscr{A}_2$ are isomorphic as rings, then the Banach spaces $E$ and $F$ are isomorphic.\end{lemma}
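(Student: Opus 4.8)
The plan is to recover each Banach space, up to isomorphism, from the ring structure of \emph{any} standard operator algebra sitting over it, and then to transport this identification across the given ring isomorphism $\phi\colon\mathscr{A}_1\to\mathscr{A}_2$. This is the classical strategy behind Eidelheit's theorem, the only extra care being that $\mathscr{A}_i$ need not exhaust $\mathscr{B}(\cdot)$. First I would isolate the finite-rank operators purely ring-theoretically: in any standard operator algebra $\mathscr{A}$ over $E$ the \emph{socle} (the sum of the minimal left ideals, equivalently of the minimal right ideals) equals $\mathscr{F}(E)$, and the minimal idempotents of $\mathscr{A}$ are exactly the rank-one idempotents $x\otimes f$ with $f(x)=1$, characterised by the condition that $p\mathscr{A}p$ be a division ring. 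Since a ring isomorphism preserves socles, minimal one-sided ideals, and minimal idempotents, $\phi$ restricts to a ring isomorphism $\mathscr{F}(E)\to\mathscr{F}(F)$ and carries minimal idempotents to minimal idempotents.

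Next I would fix a minimal idempotent $p=x_0\otimes f_0$ (so $f_0(x_0)=1$) and put $q=\phi(p)$. The minimal left ideal $\mathscr{A}_1 p$ equals $E\otimes f_0$ and is naturally identified with $E$, while the minimal right ideal $p\mathscr{A}_1$ equals $x_0\otimes E^{*}$ and is identified with $E^{*}$; the division ring $p\mathscr{A}_1 p$ is the scalar field $\mathbb{K}$. Transporting these along $\phi$ produces additive bijections $u\colon E\to F$ and $v\colon E^{*}\to F^{*}$, together with a field automorphism $\sigma\colon\mathbb{K}\to\mathbb{K}$ arising from the isomorphism $p\mathscr{A}_1 p\to q\mathscr{A}_2 q$, with respect to which $u$ and $v$ are $\sigma$-semilinear. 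A short computation from the multiplication rule $(z\otimes f_0)(x_0\otimes g)=z\otimes g$ then yields the clean formula $\phi(z\otimes g)=u(z)\otimes v(g)$ for all $z\in E$ and $g\in E^{*}$, and comparing $(z\otimes g)^{2}=g(z)\,(z\otimes g)$ with its image gives the duality relation $\langle u(z),v(g)\rangle=\sigma(\langle z,g\rangle)$. Since $\phi$ intertwines left multiplications, $u$ also satisfies $u(Tz)=\phi(T)u(z)$, so $\phi$ is spatial, of the form $T\mapsto uTu^{-1}$, on the finite-rank part.

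It remains to upgrade $u$ to a genuine Banach-space isomorphism, and this is the main obstacle. Algebraically everything is already in place, $u$ being an additive bijection intertwining $\phi$; the real work is to prove that $u$ (and, applying the argument to $\phi^{-1}$, its inverse) is \emph{bounded}, which is an automatic-continuity statement and is where completeness must enter. I would run the closed graph theorem: if $x_n\to 0$ and $u(x_n)\to y$, then for every $g\in E^{*}$ the duality relation gives $v(g)(u(x_n))=\sigma(g(x_n))$, whose left-hand side tends to $v(g)(y)$; since $v$ is surjective onto $F^{*}$, one concludes $y=0$ as soon as $\sigma$ is continuous at $0$. In the real case $\sigma=\mathrm{id}$ automatically, so boundedness follows at once. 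In the complex case the delicate point is precisely to rule out wild (discontinuous) field automorphisms and force $\sigma$ to be the identity or complex conjugation, after which $u$ is a bounded linear or conjugate-linear bijection with bounded inverse, whence $E\cong F$ as Banach spaces. This final continuity-and-spatiality analysis is exactly the content supplied by Chernoff's theorem cited above, and I would either invoke it directly or reproduce the closed-graph argument once the automorphism $\sigma$ has been shown to be continuous.
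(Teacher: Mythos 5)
Your proposal is correct and takes essentially the same route as the paper, which offers no independent argument but proves the lemma by citing Chernoff's Corollary 3.2 (see also Asadi--Khosravi); your socle/minimal-idempotent reconstruction, the spatial formula $\phi(z\otimes g)=u(z)\otimes v(g)$, and the closed-graph continuity argument form a faithful outline of the classical proof behind that citation. The single step you defer to Chernoff --- excluding discontinuous field automorphisms $\sigma$ of $\mathbb{C}$ so that the closed-graph argument applies --- is exactly the content the paper itself takes from the reference, so your treatment is consistent with (indeed more detailed than) the paper's.
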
 
Let $S$ be a semigroup written multiplicatively. The Banach space $\ell_1(S)$, the space of purely atomic measures on $S$, is naturally a Banach algebra with the convolution product, which is determined on the standard unit vector basis $(e_s)_{s\in S}$ of $\ell_1(S)$ by the rule $e_s \ast e_t = e_{s\cdot t}$ ($s,t\in S$). When $S$ is a monoid, $\ell_1(S)$ is unital.\smallskip

The \emph{density} of a topological space $X$ is the least cardinality of a dense subset of $X$, whereas the \emph{weight} of $X$ is the least cardinality of a topological base of $X$. When $X$ is compact, it follows from the Stone--Weierstra{\ss} theorem that the density of the Banach space $C(X)$ coincides with the weight of $X$. For an infinite cardinal number, we denote by $D(\lambda)$ the discrete space of cardinality $\lambda$ (whose underlying set is $\lambda$) and by $\lambda^+$ the cardinal successor of $\lambda$, the least cardinal number bigger than $\lambda$. A topological space $X$ is \emph{scattered}, whenever every non-empty subset of $X$ has an isolated point. A topological space $X$ is \emph{Fr\'echet--Urysohn}, whenever for every subset $A$ of $X$ the closure of $A$ coincides with the sequential closure.\smallskip

For the remainder of this section we fix an infinite cardinal number $\lambda$. \smallskip 

The next lemma is certainly widely known, however we include its proof for the reader's convenience.
\begin{lemma}\label{density}Let $E$ be a Banach space of density $\lambda$. Then $|\mathscr{B}(E)|\leqslant 2^\lambda$.
\end{lemma}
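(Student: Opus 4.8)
The plan is to realise every bounded operator as a function that is pinned down by its behaviour on a fixed dense set, and then to count such functions by elementary cardinal arithmetic. First I would fix a dense subset $D \subseteq E$ with $|D| = \lambda$, which exists by the definition of density. Since $E$ is a metric space, its topology is determined by sequences, so every vector $x \in E$ arises as the limit of some sequence with terms in $D$. This produces a surjection from a subset of $D^{\mathbb{N}}$ onto $E$, whence $|E| \leqslant \lambda^{\aleph_0}$. As $\lambda$ is infinite, $\lambda^{\aleph_0} \leqslant (2^\lambda)^{\aleph_0} = 2^{\lambda \cdot \aleph_0} = 2^\lambda$, and therefore $|E| \leqslant 2^\lambda$.

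Next I would invoke continuity: any $T \in \mathscr{B}(E)$ is continuous and $D$ is dense, so a standard density argument shows that $T$ is uniquely determined by its restriction $T|_D$ (a continuous map out of a dense subset into the Hausdorff space $E$ admits at most one continuous extension). Consequently the assignment $T \mapsto T|_D$ is an injection of $\mathscr{B}(E)$ into the set $E^D$ of all functions from $D$ into $E$. Combining this with the cardinality estimate for $E$ yields
\[|\mathscr{B}(E)| \leqslant |E^D| = |E|^{|D|} \leqslant (2^\lambda)^\lambda = 2^{\lambda \cdot \lambda} = 2^\lambda,\]
where I use once more that $\lambda$ is infinite, so $\lambda \cdot \lambda = \lambda$. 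This gives the asserted bound.

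The only step requiring any genuine care is the estimate $|E| \leqslant 2^\lambda$, where one must exploit that in a metric space the density coincides with the sequential notion of closure, so that each point is a countable limit of dense-set members; everything else is routine cardinal arithmetic together with the elementary uniqueness of continuous extensions from a dense subset. Since no deeper Banach-space structure is involved, I do not expect any serious obstacle beyond keeping the cardinal bookkeeping straight.
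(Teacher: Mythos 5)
Your proposal is correct and follows essentially the same route as the paper: an operator is determined by its values on a dense set of cardinality $\lambda$, the bound $|E|\leqslant\lambda^{\aleph_0}\leqslant 2^\lambda$ comes from sequential density in a metric space, and the rest is the cardinal arithmetic $(2^\lambda)^\lambda = 2^\lambda$. The only cosmetic difference is that the paper computes $\lambda^\lambda = 2^\lambda$ directly via Cantor's theorem, whereas you first dominate $|E|$ by $2^\lambda$; the two computations are interchangeable.
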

\begin{proof}
Every operator in $\mathscr{B}(E)$ is determined by its values on a fixed set of cardinality $\lambda$. Thus $|\mathscr{B}(E)| \leqslant |E|^\lambda  = (\lambda^\omega)^\lambda = \lambda^\lambda = 2^\lambda,$
where the first equality follows from the fact that each point is a limit of a sequence from a set of cardinality $\lambda$, whereas the final equality follows from Cantor's theorem as
$2^\lambda\leqslant\lambda^\lambda\leqslant\left(2^\lambda\right)^\lambda=2^{\lambda\cdot\lambda}=2^\lambda.$
\end{proof}

We start by recording the following statements.

\begin{proposition}\label{Bashkirov}
There exist
\begin{romanenumerate}
    \item $\lambda^+$ pairwise non-homeomorphic compactifications of $D(\lambda)$ that are scattered, Fr\'echet--Urysohn, and have cardinality $\lambda$;
    \item\label{explambda} $2^\lambda$ pairwise non-homeomorphic Fr\'echet--Urysohn compactifications of weight $\lambda$.
    \item $2^\lambda$ pairwise non-homeomorphic compact and connected Abelian groups of weight $\lambda$.
    \item $2^{\mathfrak c}$ pairwise non-homeomorphic extremally disconnected, separable compact spaces that are topologically rigid. 
\end{romanenumerate}
\end{proposition}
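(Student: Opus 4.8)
The plan is to prove the four clauses separately, reducing each to the production of an index set of the stated size $\kappa$ (namely $\lambda^+$, $2^\lambda$, $2^\lambda$ and $2^{\mathfrak c}$) together with a homeomorphism invariant that is injective on the resulting family; a topological invariant taking $\kappa$ distinct values at once manufactures $\kappa$ pairwise non-homeomorphic spaces.

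For clauses (i) and (ii) I would cite Bashkirov's constructions of Fr\'echet--Urysohn compactifications. In the scattered case the separating invariant is the Cantor--Bendixson type---the height together with the cardinality of the top non-empty derived set. When $\lambda=\omega$ these are exactly the $\omega_1=\lambda^+$ homeomorphism types of the countable compacta $[0,\alpha]$ of Mazurkiewicz and Sierpi\'nski, each metrizable and hence Fr\'echet--Urysohn; the main obstacle for general $\lambda$ is to realise $\lambda^+$ distinct Cantor--Bendixson types by compactifications of $D(\lambda)$ of cardinality $\lambda$ while ensuring that every non-isolated point is the limit of a genuine sequence, which is what keeps the spaces Fr\'echet--Urysohn and is the content of Bashkirov's transfinite scheme. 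Dropping scatteredness and asking only for weight $\lambda$, as in (ii), loosens the constraints enough to push the count up to $2^\lambda$, and I would again quote the corresponding statement.

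Clause (iii) I would obtain from Pontryagin duality. Up to topological isomorphism the compact connected Abelian groups of weight $\lambda$ are precisely the duals $\widehat A$ of the discrete torsion-free Abelian groups $A$ with $|A|=\lambda$: here connectedness of $\widehat A$ is equivalent to torsion-freeness of $A$, and the weight of $\widehat A$ equals $|A|$. As it is classical that there are $2^\lambda$ pairwise non-isomorphic such groups $A$, this produces $2^\lambda$ groups $\widehat A$, pairwise non-isomorphic as topological groups. The real point is to upgrade this to pairwise \emph{non-homeomorphism}, since distinct topological groups might share their underlying space. I would do so through the classical isomorphism $\check H^1(\widehat A;\mathbb Z)\cong A$ between the first \v{C}ech cohomology of a compact connected Abelian group and its character group: \v{C}ech cohomology is a homotopy, hence a homeomorphism, invariant, so any homeomorphism $\widehat A\to\widehat B$ would force $A\cong B$. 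Thus the $2^\lambda$ groups are already pairwise non-homeomorphic, and this cohomological step is the crux of the clause.

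Clause (iv) I would route through Stone duality, reducing it to exhibiting $2^{\mathfrak c}$ pairwise non-isomorphic complete Boolean algebras $B$ that are rigid and possess a countable dense subset (equivalently, are completions of countable Boolean algebras). Completeness makes the Stone space $\mathrm{St}(B)$ extremally disconnected; the countable dense subset yields a countable dense set of points, so $\mathrm{St}(B)$ is separable; non-isomorphic algebras have non-homeomorphic Stone spaces; and, since autohomeomorphisms of $\mathrm{St}(B)$ correspond exactly to automorphisms of $B$, rigidity of $B$ is precisely topological rigidity of $\mathrm{St}(B)$. The decisive and hardest step---the main obstacle of the whole proposition---is the simultaneous achievement of rigidity and of the full multiplicity $2^{\mathfrak c}$; for this I would invoke the known combinatorial constructions of large families of rigid complete Boolean algebras and check that each carries a countable dense subset.
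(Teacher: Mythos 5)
Clauses (i)--(iii) of your proposal are essentially sound and, for (i) and (ii), coincide with the paper's proof, which is a direct citation of Bashkirov \cite[Theorem 1 \& Lemma 4]{Bashkirov:1980}; your remark that for $\lambda=\omega$ clause (i) reduces to the classical homeomorphism classification of the countable compacta $[0,\alpha]$ is a correct sanity check. For (iii) the paper simply cites \cite[Theorem 1.4]{OrsattiRodino:1986}, whereas you reconstruct the underlying argument: Pontryagin duality to pass from $2^\lambda$ non-isomorphic torsion-free Abelian groups of cardinality $\lambda$ (classical, e.g.\ via completely decomposable groups and Baer's classification by types) to $2^\lambda$ non-isomorphic compact connected groups of weight $\lambda$, and then the Bruschlinsky-type isomorphism $\check H^1(\widehat A;\mathbb Z)\cong A$ to upgrade non-isomorphism to non-homeomorphism. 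You correctly identify this cohomological step as the crux --- it is exactly the standard fact behind the citation --- so your route buys a self-contained proof where the paper settles for a reference; note that the paper itself later uses the same duality dictionary (connected $\leftrightarrow$ torsion-free, weight $\leftrightarrow$ cardinality) in the proof of Theorem 1.1(iv).

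Clause (iv), however, contains a genuine error, not merely a deferred citation. Your claimed equivalence between separability of $\mathrm{St}(B)$ and $B$ being the completion of a countable Boolean algebra is false, and fatally so for your plan: a complete Boolean algebra with a countable order-dense subset \emph{is} the completion of the (countable) subalgebra that subset generates, and no infinite such completion is rigid. Indeed, if it has two atoms, transposing them extends to a nontrivial automorphism; otherwise its atomless part is nonzero and, being the completion of a countable atomless algebra, is the Cohen algebra $\mathrm{Borel}(2^\omega)/\mathrm{meagre}$, which is homogeneous and has a huge automorphism group. So the family of ``rigid complete Boolean algebras carrying a countable dense subset'' that your final step proposes to invoke is empty (apart from finite trivialities), and there is nothing to ``check''. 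The correct Boolean translation of separability of the Stone space is strictly weaker than order-density: one needs countably many ultrafilters whose union covers $B\setminus\{0\}$, equivalently an embedding of $B$ as an abstract (neither dense nor complete) subalgebra of $\mathcal P(\omega)$. With that weaker condition rigidity is achievable, and producing $2^{\mathfrak c}$ pairwise non-isomorphic such algebras in ZFC is precisely the nontrivial content of Dow--Gubbi--Szyma\'nski \cite{Dowetal:1988}, which is all the paper cites for this clause; your Stone-duality framing is the right spirit, but as stated it reduces the problem to constructing objects that cannot exist.
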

\begin{proof}
The first two clauses are due to Bashkirov (\cite[Theorem 1 \& Lemma 4]{Bashkirov:1980}). The penultimate clause is well-known and stronger results are available; see, \emph{e.g.}, \cite[Theorem 1.4]{OrsattiRodino:1986}). The final clause is due to Dow, Gubbi, and Szyma\'nski (\cite{Dowetal:1988}).\end{proof}

Rudin observed that when $X$ is a compact scattered space, then the dual space of $C(X)$ is isometric to $\ell_1(X)$ (\cite[Theorem 6]{Rudin:1957}). In particular, for every scattered compactification $X$ of $D(\lambda)$ that has cardinality $\lambda$ the dual space $C(X)^*\cong \ell_1(X)$ has density $\lambda$. Let $\alpha$ be an~ordinal number. Then the ordinal interval $[0,\alpha]$ is compact and scattered. Isomorphism types of the spaces $C[0,\alpha]$ are distinguished, for example, by the ordinal-valued Szlenk index, which is an isomorphic invariant of Banach spaces (\cite[Fact 2.38]{Hajeketal}). We do not intend to define the Szlenk index here, instead we refer to \cite[Section 2.4]{Hajeketal} for the definition and basic properties thereof. The Szlenk index was computed for countable $\alpha$ by Samuel \cite{Samuel:1984} and the computation was extended to arbitrary ordinals by Brooker \cite{Brooker:2013}. 

\begin{proposition}\label{prop:Szlenk}
Let $\alpha$ be an infinite ordinal number. Then ${\rm Sz}\, C[0,\alpha] = \omega^{\gamma+1}$, where $\gamma$ is the unique ordinal number such that $\omega^{\omega^\gamma}\leqslant \alpha < \omega^{\omega^{\gamma+1}}$.
\end{proposition}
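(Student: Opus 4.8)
The plan is to reduce the computation to the Cantor--Bendixson structure of $[0,\alpha]$ and then appeal to the known translation between the Cantor--Bendixson index of a compact scattered space $K$ and the Szlenk index of $C(K)$; this translation is precisely what \cite{Samuel:1984} supplies for countable $\alpha$ and \cite{Brooker:2013} for arbitrary $\alpha$. First I would determine the Cantor--Bendixson derivatives of $K=[0,\alpha]$. Since the isolated points of $[0,\alpha]$ are $0$ together with the successor ordinals, a short transfinite induction shows that, for $\eta\geqslant1$, the derivative $K^{(\eta)}$ consists exactly of the nonzero ordinals $\leqslant\alpha$ divisible by $\omega^\eta$; equivalently, a point $\beta$ survives to $K^{(\eta)}$ precisely when the least exponent in the Cantor normal form of $\beta$ is at least $\eta$. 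Writing $\alpha=\omega^{\delta_1}c_1+\dots+\omega^{\delta_k}c_k$ with $\delta_1>\dots>\delta_k\geqslant0$ and each $c_i$ a positive integer, it follows that the largest Cantor--Bendixson rank attained in $[0,\alpha]$ is the leading exponent $\delta_1$, that $K^{(\delta_1)}=\{\omega^{\delta_1}j:1\leqslant j\leqslant c_1\}$ is finite and non-empty, and that $K^{(\delta_1+1)}=\varnothing$; in other words the Cantor--Bendixson index of $[0,\alpha]$ equals $\delta_1+1$. Finally, the hypothesis $\omega^{\omega^\gamma}\leqslant\alpha<\omega^{\omega^{\gamma+1}}$ is precisely the assertion $\omega^\gamma\leqslant\delta_1<\omega^{\gamma+1}$.

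Next I would invoke the relationship between the Cantor--Bendixson index of a compact scattered space $K$ and the Szlenk index of $C(K)$, which in the form convenient here states that ${\rm Sz}\,C(K)=\xi\cdot\omega$ whenever $K^{(\xi)}$ is finite and non-empty. Combined with the previous paragraph this yields ${\rm Sz}\,C[0,\alpha]=\delta_1\cdot\omega$, and the proof is then completed by the elementary ordinal identity $\delta_1\cdot\omega=\omega^{\gamma+1}$, valid for every $\delta_1$ with $\omega^\gamma\leqslant\delta_1<\omega^{\gamma+1}$: right-multiplication by $\omega$ absorbs all but the leading summand $\omega^\gamma$ of $\delta_1$, so that $\delta_1\cdot\omega=\omega^\gamma\cdot\omega=\omega^{\gamma+1}$.

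It remains to recall why the displayed relation holds, this being the genuine analytic heart of the matter. By Rudin's theorem, recalled above, $C[0,\alpha]^{*}$ is isometrically $\ell_1([0,\alpha])$, and one computes the Szlenk index by iterating the $w^*$-derivations $s_\epsilon$ on the unit ball of this space. For the lower bound one produces, at each scale $\epsilon>0$, $w^*$-compact subsets of the ball assembled from signed point masses $\pm\delta_\beta$ indexed along finite trees of ordinals that shadow the Cantor--Bendixson hierarchy of $[0,\alpha]$; these witness that the $\epsilon$-derivation survives on the order of $\xi$ steps, with the number of surviving steps growing without bound as $\epsilon\to0$, so that $\sup_\epsilon{\rm Sz}_\epsilon$ attains $\xi\cdot\omega$. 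For the matching upper bound one shows, by transfinite induction on the Cantor--Bendixson rank, that measures whose mass concentrates near $K^{(\eta)}$ are annihilated after on the order of $\eta$ derivations, which bounds each ${\rm Sz}_\epsilon$ and hence the supremum.

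The main obstacle is twofold. First is the precise $\omega$-factor accounting---explaining why one unit of Cantor--Bendixson rank inflates to a whole power of $\omega$ in the Szlenk index---which is dictated by the passage to $\sup_\epsilon{\rm Sz}_\epsilon$ and must be pinned down from both sides simultaneously. Second, and the reason the statement must appeal to \cite{Brooker:2013} beyond \cite{Samuel:1984}, is that for uncountable $\alpha$ the dual $\ell_1([0,\alpha])$ is non-separable, so the $w^*$-topology on its ball is no longer metrizable and the tree constructions and inductive annihilation estimates must be carried out with no appeal whatsoever to sequences---exactly the extension to arbitrary ordinals achieved by Brooker.
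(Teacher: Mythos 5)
Your proposal is correct and ultimately takes the same route as the paper, which gives no proof of Proposition~\ref{prop:Szlenk} at all but simply quotes the computations of Samuel \cite{Samuel:1984} (countable $\alpha$) and Brooker \cite{Brooker:2013} (arbitrary $\alpha$); your argument rests on the same citations, and your added Cantor--Bendixson computation together with the identity $\delta_1\cdot\omega=\omega^{\gamma+1}$ for $\omega^\gamma\leqslant\delta_1<\omega^{\gamma+1}$ correctly shows that your ``${\rm Sz}\,C(K)=\xi\cdot\omega$'' formulation is, for $K=[0,\alpha]$, equivalent to the cited statement (though note that Samuel and Brooker state their results in the $\omega^{\gamma+1}$ form for ordinal intervals, not in your general-$K$ form). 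Your third paragraph is only a heuristic sketch and could not stand as an independent proof of the Samuel--Brooker theorem, but it is not needed once the citations are in place, exactly as in the paper.
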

\begin{corollary}\label{cor:manytypes}
For every cardinal number $\lambda$ there are precisely $\lambda$ Banach space isomorphism types of the spaces $C[0,\alpha]$ for ordinals $\alpha < \lambda$.
\end{corollary}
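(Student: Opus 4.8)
The plan is to establish the equality by squeezing: I would show that the number of isomorphism types is both at most $\lambda$ and at least $\lambda$. The upper bound is immediate and requires no work, since the assignment $\alpha \mapsto C[0,\alpha]$ is defined on the set of ordinals $\alpha < \lambda$, a set of cardinality $\lambda$, so its range---a fortiori the set of isomorphism classes it determines---has cardinality at most $\lambda$. Thus all the content lies in producing $\lambda$ pairwise non-isomorphic spaces among the $C[0,\alpha]$ with $\alpha < \lambda$.

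For the lower bound I would split into two cases according to whether $\lambda$ is countable. When $\lambda = \omega$, every $\alpha < \omega$ is a finite ordinal, so $[0,\alpha]$ is a finite discrete space and $C[0,\alpha]$ is finite-dimensional of dimension $\alpha + 1$; since dimension is an isomorphic invariant, these spaces are pairwise non-isomorphic and furnish exactly $\omega = \lambda$ types. When $\lambda$ is uncountable I would invoke Proposition~\ref{prop:Szlenk} together with the fact (recalled in the excerpt) that the Szlenk index is an isomorphic invariant. The decisive arithmetic observation is that every uncountable cardinal is an epsilon number, that is, $\omega^\lambda = \lambda$ in ordinal exponentiation: indeed, using the standard identity $|\omega^\gamma| = \max(\aleph_0,|\gamma|)$ one gets $\omega^\gamma < \lambda$ for each $\gamma < \lambda$, whence $\omega^\lambda = \sup_{\gamma<\lambda}\omega^\gamma \leqslant \lambda$, and the reverse inequality is trivial. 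Granting this, for every $\gamma < \lambda$ strict monotonicity of exponentiation gives first $\omega^\gamma < \omega^\lambda = \lambda$ and then $\omega^{\omega^\gamma} < \omega^\lambda = \lambda$. Hence each $\alpha_\gamma := \omega^{\omega^\gamma}$ is an infinite ordinal below $\lambda$ with ${\rm Sz}\, C[0,\alpha_\gamma] = \omega^{\gamma+1}$ by Proposition~\ref{prop:Szlenk}. As $\gamma \mapsto \omega^{\gamma+1}$ is injective, the $\lambda$-many spaces $C[0,\alpha_\gamma]$ (for $\gamma < \lambda$) carry pairwise distinct Szlenk indices and are therefore pairwise non-isomorphic, delivering the required $\lambda$ types.

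The hard part is entirely the ordinal arithmetic underlying the uncountable case, namely verifying $\omega^\lambda = \lambda$ and deducing $\omega^{\omega^\gamma} < \lambda$ for all $\gamma < \lambda$; once the interval $[\omega^{\omega^\gamma}, \omega^{\omega^{\gamma+1}})$ of Proposition~\ref{prop:Szlenk} is seen to be populated by ordinals below $\lambda$ for every $\gamma < \lambda$, the counting and the appeal to Szlenk invariance are routine. A minor point to keep straight is that the Szlenk argument applies only to infinite $\alpha$, which is precisely why the countable case $\lambda = \omega$ must be treated separately through finite-dimensionality.
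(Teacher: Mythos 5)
Your proof is correct and follows the same route the paper intends: the corollary is stated there as an immediate consequence of Proposition~\ref{prop:Szlenk}, with the Szlenk index (an isomorphic invariant) distinguishing the spaces $C[0,\omega^{\omega^\gamma}]$ for $\gamma<\lambda$, exactly as you argue. You merely make explicit the details the paper leaves tacit---the trivial upper bound, the finite-dimensional count for $\lambda=\omega$, and the ordinal-arithmetic fact that every uncountable cardinal is an $\varepsilon$-number so that $\omega^{\omega^\gamma}<\lambda$ for all $\gamma<\lambda$---all of which check out.
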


The subsequent lemma is certainly folk knowledge; we include it with a proof for the sake of completeness.
\begin{lemma}\label{lem:compact}
Let $E$ be a Banach space and let $\lambda$ be the density of $E^*$. Then $\mathscr{K}(E)$ has density $\lambda$.
\end{lemma}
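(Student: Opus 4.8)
The plan is to prove both inequalities $\mathrm{dens}\,\mathscr{K}(E) \leqslant \lambda$ and $\mathrm{dens}\,\mathscr{K}(E) \geqslant \lambda$, where $\lambda = \mathrm{dens}\, E^*$. For the upper bound, I would first recall that every compact operator on $E$ is a norm-limit of finite-rank operators precisely when $E$ has the approximation property, which we cannot assume in general; however, even without it, one can still control $\mathscr{K}(E)$ by $E^*$ together with $E$. The cleaner route is to observe that a compact operator $T \in \mathscr{K}(E)$ sends the unit ball $B_E$ into a norm-separable-relative-to-$\lambda$ set, and more usefully that $\mathscr{K}(E) \subseteq \mathscr{B}(E)$ is controlled by functionals: I would use the standard fact that the density of $\mathscr{K}(E)$ is at most the density of $E^* \widehat{\otimes}_\varepsilon E$ (or rather exploit that approximable operators are spanned by rank-one operators $x^* \otimes x$ with $x^* \in E^*$, $x \in E$), so that a dense set in $E^*$ of cardinality $\lambda$ and a dense set in $E$ of cardinality $\mathrm{dens}\, E \leqslant \lambda$ yield a dense set of rank-one operators, whose rational (or countable) linear span is dense in $\mathscr{A}(E)$.

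The subtle point is that the lemma concerns $\mathscr{K}(E)$, which may strictly contain $\mathscr{A}(E)$, so the density estimate for $\mathscr{A}(E)$ does not immediately transfer. Here I would argue that $\mathscr{A}(E)$ is dense in $\mathscr{K}(E)$ for the purpose of density counting only if we are careful; instead, a robust approach uses that any $T \in \mathscr{K}(E)$ is determined by its action on a fixed dense subset $D \subseteq E$ of cardinality $\mathrm{dens}\, E$, and that the image $\overline{T(B_E)}$ is compact hence separable, so $T$ is coded by a map from a countable dense subset of $B_E$ into a separable metric space. Counting such codings bounds $|\mathscr{K}(E)|$, but for density (rather than cardinality) I would instead directly show that the rank-one operators $\{x^* \otimes x : x^* \in S^*,\, x \in S\}$, with $S^*$ dense in $E^*$ and $S$ dense in $E$ of sizes $\lambda$ and $\mathrm{dens}\,E \leqslant \lambda$ respectively, have closed linear span equal to $\mathscr{A}(E)$, and then that $\mathscr{A}(E)$ is norm-dense in $\mathscr{K}(E)$ is false in general, so the upper bound must be obtained by the coding/cardinality argument giving $\mathrm{dens}\,\mathscr{K}(E) \leqslant |\mathscr{K}(E)| \leqslant \lambda^\lambda$—too weak. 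The correct fix is to note $\mathscr{K}(E)$ embeds isometrically into $C(\overline{B_{E^*}}, E)$ via $T \mapsto (x^* \mapsto T^* x^*)$ wait—cleaner: every compact $T$ factors through a separable reflexive space only under stronger hypotheses, so I would instead use that $\mathrm{dens}\,\mathscr{K}(E) = \mathrm{dens}\,\mathscr{A}(E)$ because $\mathscr{A}(E)$ is dense in $\mathscr{K}(E)$ fails, hence the genuinely correct statement is that $\mathscr{K}(E)$ sits inside the injective tensor product $E^* \eten E$-type space only up to approximable operators.

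For the lower bound $\mathrm{dens}\,\mathscr{K}(E) \geqslant \lambda$, the plan is to exhibit an isometric (or bounded-below) embedding of $E^*$ into $\mathscr{K}(E)$: fixing any norm-one $x_0 \in E$, the map $E^* \to \mathscr{K}(E)$, $x^* \mapsto x^* \otimes x_0$, is an isometry onto a subspace of rank-one (hence compact) operators, since $\|x^* \otimes x_0\| = \|x^*\|\,\|x_0\| = \|x^*\|$. Therefore $\mathrm{dens}\,\mathscr{K}(E) \geqslant \mathrm{dens}\, E^* = \lambda$. This direction is routine and is the shorter half.

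The main obstacle, which I flag explicitly, is the upper bound: establishing $\mathrm{dens}\,\mathscr{K}(E) \leqslant \lambda$ requires knowing that compact operators are approximable in a density-preserving sense, and in a general Banach space $\mathscr{K}(E) \neq \mathscr{A}(E)$. I expect the correct argument is that, although individual compact operators need not be norm-limits of finite-rank ones, the \emph{density} of $\mathscr{K}(E)$ is still bounded by $\lambda$ via the following: the unit ball of $\mathscr{K}(E)$ in the strong operator topology relative to a dense set $D \subseteq B_E$ of size $\mathrm{dens}\,E \leqslant \lambda$ is determined by its values on $D$, and each value lies in $E$ whose density is $\leqslant \lambda$; a Banach space whose elements are coded by $\lambda$-many coordinates each in a space of density $\leqslant \lambda$ has density $\leqslant \lambda$. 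Making this precise—passing from a pointwise (SOT) coding to a norm-density bound—is exactly where care is needed, and I would resolve it by selecting, for each element of a hypothetical $\lambda^+$-sized norm-separated family in $\mathscr{K}(E)$, a witnessing point in $D$, then pigeonholing to contradict $\mathrm{dens}\,E \leqslant \lambda$ together with compactness of each operator's image.
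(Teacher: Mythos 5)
Your lower bound is correct and is exactly the paper's: the rank-one embedding $x^*\mapsto x^*\otimes x_0$ isometrically copies $E^*$ into $\mathscr{K}(E)$, giving ${\rm dens}\,\mathscr{K}(E)\geqslant\lambda$. The upper bound, however, has a genuine gap. The scheme you settle on --- code operators by their values on a dense set $D\subseteq B_E$ of size at most $\lambda$, then pigeonhole witnesses of a hypothetical $\lambda^+$-sized $\varepsilon$-separated family --- cannot be completed, for a structural reason: every ingredient you actually deploy (pointwise determination on $D$; values lying in a space of density $\leqslant\lambda$) is equally available for arbitrary bounded operators, yet $\mathscr{B}(E)$ can have density $2^\lambda$. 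Concretely, the diagonal projections $P_A$, $A\subseteq\mathbb{N}$, form a $1$-separated family of size $\mathfrak{c}$ in $\mathscr{B}(\ell_2)$, each determined by its values on a countable dense set, while ${\rm dens}\,\ell_2^*=\omega$; so your scheme would ``prove'' a false statement unless compactness enters through an actual mechanism, and your sketch invokes it only as a tag. The combinatorics also fails on its own terms: separation is a \emph{pairwise} property, so a witness is attached to a pair $\{\alpha,\beta\}$, not to a single operator, and extracting even three operators sharing a common witness amounts to a partition relation of the type $\lambda^+\to(3)^2_\lambda$, which fails in ZFC (colour $\{\alpha,\beta\}$, $\alpha<\beta$, by $f_\beta(\alpha)$ for fixed injections $f_\beta\colon\beta\to\lambda$). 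So the pigeonhole scaffold provably cannot supply the missing analytic step.

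Ironically, you wrote down the correct idea mid-proposal and discarded it: the map $T\mapsto(x^*\mapsto T^*x^*)$ is precisely how the paper argues, except that the codomain must be $E^*$, not $E$ as you wrote. By Schauder's theorem $T^*$ is compact, and since it is weak*-continuous, compactness makes $T^*$ weak*-to-\emph{norm} continuous on the weak*-compact ball $B_{E^*}$ --- this is exactly where compactness is spent. Hence $T\mapsto T^*|_{B_{E^*}}$ embeds $\mathscr{K}(E)$ isometrically into $C\bigl((B_{E^*},{\rm w}^*),E^*\bigr)$, which is isometrically the injective tensor product of $C(B_{E^*})$ and $E^*$; since the weight of $(B_{E^*},{\rm w}^*)$ does not exceed the metric weight $\lambda$ of $B_{E^*}$, the space $C(B_{E^*})$ has density at most $\lambda$, and therefore so does the tensor product and with it $\mathscr{K}(E)$. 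This route completely bypasses the approximation-property worry you correctly raised (no density of $\mathscr{A}(E)$ in $\mathscr{K}(E)$ is needed), and it is what should replace the second and fourth paragraphs of your write-up.
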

\begin{proof}
There is an isometric isomorphism between $\mathscr{K}(E)$ and the space $\mathscr{K}^{{\rm w}^*}(E^*)$ of weak*-continuous compact operators on $E^*$. Indeed, by Schauder's theorem (see \textit{e.g.} \cite[Theorem~3.4.15]{Megginson}), $T\in \mathscr{K}(E)$ if and only if $T^*\in \mathscr{K}(E^*)$. Secondly, an operator is weak*- to weak*-continuous on $E^*$ if and only if it is an adjoint of some bounded linear operator on $E$ (see \textit{e.g.} \cite[Proposition~2.4.12]{Pedersen}). Consequently, the restriction of the adjoint operation
$\mathscr{K}(E) \to \mathscr{K}^{{\rm w}^*}(E^*); \; T \mapsto T^*$
is an isometric isomorphism. It is enough then to show that the space $\mathscr{K}^{{\rm w}^*}(E^*)$ of weak*-continuous compact operators on $E^*$ has density $\lambda$. 

Every operator $T\in \mathscr{B}(E^*)$ is uniquely determined by its restriction to $B_{E^*}$, which is compact in the weak*-topology by the Banach--Alaoglu theorem. Thus, for an operator $T\in \mathscr{K}^{{\rm w}^*}(E^*)$, $T|_{B_{E^*}}\in C(B_{E^*}, E^*)$ because by compactness, $T^*$ is weak*-to-norm continuous; here $C(B_{E^*}, E^*)$ denotes the Banach space of $E^*$-valued continuous functions on $B_{E^*}$  endowed with the relative weak*-topology. Now $C(B_{E^*}, E^*)$ is isometrically isomorphic to the injective tensor product of $C(B_{E^*})$ and $E^*$ (see \textit{e.g.} \cite[p.~50]{Ryan}). The space $C(B_{E^*})$ has density at most $\lambda$ because the weight of $B_{E^*}$ in the weak*-topology does not exceed the metric weight of $B_{E^*}$, which is $\lambda$. Thus, $C(B_{E^*}, E^*)$ has density $\lambda$. Consequently, $\mathscr{K}^{{\rm w}^*}(E^*)$ has density at most $\lambda$. However, $\mathscr{K}(E)$ contains isomorphic copies of $E^*$ so this density is precisely $\lambda$.\end{proof}

\begin{proposition}
For every cardinal number $\lambda$ there are precisely $\lambda^+$ Banach algebra isomorphism types of the algebras $\mathscr{K}(C[0,\alpha])$ for $\alpha \in [\lambda, \lambda^+)$. Moreover every above-listed space has density $\lambda$.
\end{proposition}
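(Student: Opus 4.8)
The plan is to dispatch the density (``moreover'') claim first, since it is a direct consequence of the machinery already assembled. Fix $\alpha \in [\lambda, \lambda^+)$. The ordinal interval $[0,\alpha]$ is compact and scattered, and as a set it has cardinality $|\alpha| = \lambda$, because $\lambda \leqslant \alpha < \lambda^+$. By Rudin's theorem recorded above, $C[0,\alpha]^* \cong \ell_1([0,\alpha])$, whose density equals $|[0,\alpha]| = \lambda$. Applying Lemma~\ref{lem:compact} with $E = C[0,\alpha]$ then yields that $\mathscr{K}(C[0,\alpha])$ has density $\lambda$, as required.

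For the enumeration, the upper bound is immediate: the family $\{\mathscr{K}(C[0,\alpha]) : \alpha \in [\lambda,\lambda^+)\}$ is indexed by a set of cardinality $\lambda^+$, so there are at most $\lambda^+$ isomorphism types. The content is the lower bound. First I would note that $\mathscr{K}(C[0,\alpha])$ is a standard operator algebra over $C[0,\alpha]$, being a closed subalgebra (in fact a closed ideal) of $\mathscr{B}(C[0,\alpha])$ containing $\mathscr{F}(C[0,\alpha])$. Hence, by Lemma~\ref{lem:operatoralgebra}, any Banach-algebra (in particular any ring) isomorphism $\mathscr{K}(C[0,\alpha]) \cong \mathscr{K}(C[0,\beta])$ forces a Banach-space isomorphism $C[0,\alpha] \cong C[0,\beta]$. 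It therefore suffices to exhibit $\lambda^+$ pairwise non-isomorphic Banach spaces among the $C[0,\alpha]$ with $\alpha \in [\lambda,\lambda^+)$. To detect non-isomorphism I would invoke the Szlenk index, which is a Banach-space isomorphism invariant: by Proposition~\ref{prop:Szlenk}, ${\rm Sz}\, C[0,\alpha] = \omega^{\gamma(\alpha)+1}$, where $\gamma(\alpha)$ is the unique ordinal with $\omega^{\omega^{\gamma(\alpha)}} \leqslant \alpha < \omega^{\omega^{\gamma(\alpha)+1}}$. Since $\gamma \mapsto \omega^{\gamma+1}$ is injective, distinct values of $\gamma(\alpha)$ produce distinct Szlenk indices, hence non-isomorphic spaces, and in turn non-isomorphic algebras.

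The heart of the matter, and the step I expect to be the main obstacle, is thus a purely ordinal-arithmetic count: I must show that $\gamma(\alpha)$ attains $\lambda^+$ distinct values as $\alpha$ ranges over $[\lambda,\lambda^+)$. The key tool is the standard fact that $|\omega^\beta| = \max(\aleph_0, |\beta|)$ for every ordinal $\beta$, applied twice to give $|\omega^{\omega^\gamma}| = \max(\aleph_0,|\gamma|)$; consequently, for every $\gamma < \lambda^+$ one has $|\gamma| \leqslant \lambda$ and hence $\omega^{\omega^\gamma} < \lambda^+$. Writing $\gamma_0 = \gamma(\lambda)$, the inequalities $\gamma_0 \leqslant \omega^{\gamma_0} \leqslant \omega^{\omega^{\gamma_0}} \leqslant \lambda$ give $\gamma_0 < \lambda^+$. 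I then claim that for each $\gamma$ with $\gamma_0 < \gamma < \lambda^+$ the ordinal $\alpha_\gamma := \omega^{\omega^\gamma}$ lies in $[\lambda,\lambda^+)$: on the one hand $\alpha_\gamma \geqslant \omega^{\omega^{\gamma_0+1}} > \lambda$ by monotonicity of ordinal exponentiation together with the defining inequality of $\gamma_0$, and on the other $\alpha_\gamma < \lambda^+$ by the cardinality computation. Since $\gamma(\alpha_\gamma) = \gamma$ directly from the definition, every value in $\{\gamma : \gamma_0 < \gamma < \lambda^+\}$ is attained, and this set has cardinality $\lambda^+$. Combined with the upper bound, this gives precisely $\lambda^+$ isomorphism types. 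The delicate points to verify with care are the two cardinality estimates for $\omega^{\omega^\gamma}$ and the monotonicity inequality $\alpha_\gamma \geqslant \omega^{\omega^{\gamma_0+1}}$, all of which rest only on elementary properties of ordinal exponentiation; everything else is bookkeeping.
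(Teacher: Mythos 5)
Your proposal is correct and follows essentially the same route as the paper: the density claim via Rudin's theorem and Lemma~\ref{lem:compact}, and the count by transferring Banach-space non-isomorphism, detected by the Szlenk index, through Lemma~\ref{lem:operatoralgebra}. The only difference is that where the paper simply cites Corollary~\ref{cor:manytypes} (applied at $\lambda^+$), you re-derive the needed instance from Proposition~\ref{prop:Szlenk} by explicitly constructing the representatives $\omega^{\omega^\gamma}\in[\lambda,\lambda^+)$ for $\gamma_0<\gamma<\lambda^+$; your ordinal arithmetic is sound and in fact slightly more careful, since it exhibits $\lambda^+$ distinct types witnessed \emph{inside} the interval $[\lambda,\lambda^+)$ rather than merely in $[0,\lambda^+)$.
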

\begin{proof}By Corollary~\ref{cor:manytypes}, there are $\lambda^+$ isomorphism types of the spaces $C[0,\alpha]$, where $\alpha \in [\lambda, \lambda^+)$ is an ordinal. Each space $C[0,\alpha]$ has density $\lambda$, because the weight of $[0, \alpha]$ is $\lambda$. Since $[0,\alpha]$ is scattered, the dual space of $C[0,\alpha]$ is isometric to $\ell_1([0,\alpha])$, which has density $\lambda$ too. By Lemma~\ref{lem:compact}, the space $\mathscr{K}(C[0,\alpha])$ has density $\lambda$. Finally, for ordinals $\alpha, \beta \in [\lambda, \lambda^+)$, by Lemma~\ref{lem:operatoralgebra}, the following statements are equivalent:
\begin{itemize}
    \item $C[0,\alpha] \cong C[0,\beta]$ as Banach spaces,
    \item $\mathscr{K}(C[0,\alpha]) \cong \mathscr{K}(C[0,\beta])$ as rings,
\end{itemize}
which concludes the proof.\end{proof}

\begin{lemma}\label{lem:howmany}Let $\kappa$ be an infinite cardinal. Every unital Banach algebra of density $\kappa$ is a~quotient of $\ell_1(S_\kappa)$, where $S_\kappa$ is the free monoid on $\kappa$ generators. Consequently, there are precisely $2^\kappa$ pairwise non-isomorphic unital Banach algebras of density $\kappa$. \end{lemma}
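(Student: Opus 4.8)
The plan is to treat the two assertions in turn, deriving the cardinality count from the universal quotient property, whose upper half rests on that property and whose lower half is supplied by the commutative examples already at hand.

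For the quotient property, I would fix a unital Banach algebra $\mathscr{A}$ of density $\kappa$ and write the unit ball of $\mathscr{A}$ as the closure of a set $\{a_i : i < \kappa\}$ with $\|a_i\| \le 1$. Writing $(g_i)_{i<\kappa}$ for the free generators of $S_\kappa$, I define $\varphi \colon \ell_1(S_\kappa) \to \mathscr{A}$ on the canonical basis by sending the empty word to the unit of $\mathscr{A}$ and the word $g_{i_1}\cdots g_{i_n}$ to the product $a_{i_1}\cdots a_{i_n}$, and then extend by linearity and continuity. Since $\|a_{i_1}\cdots a_{i_n}\| \le 1$ the map $\varphi$ is a contraction, and because concatenation of words is carried to multiplication in $\mathscr{A}$ it is a unital algebra homomorphism. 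The decisive point is surjectivity: restricting $\varphi$ to the isometric copy of $\ell_1(\kappa)$ spanned by the single-letter words $e_{g_i}$ recovers the assignment $e_{g_i}\mapsto a_i$, which is the canonical metric surjection of $\ell_1(\kappa)$ onto the density-$\kappa$ space $\mathscr{A}$ (built by successive approximation against the dense set $\{a_i\}$). Hence $\varphi$ is already onto, so it is a surjective bounded homomorphism, and the open mapping theorem identifies $\mathscr{A}$ with $\ell_1(S_\kappa)/\ker\varphi$.

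For the count, I first observe that $|S_\kappa| = \kappa$, so $\ell_1(S_\kappa)$ has density $\kappa$ and cardinality at most $2^\kappa$. By the previous paragraph every unital Banach algebra of density $\kappa$ is isomorphic to a quotient of $\ell_1(S_\kappa)$ by a closed two-sided ideal; since each closed subspace of $\ell_1(S_\kappa)$ is the closed linear span of a family indexed by $\kappa$, there are at most $|\ell_1(S_\kappa)|^\kappa \le (2^\kappa)^\kappa = 2^\kappa$ closed subspaces, hence at most $2^\kappa$ closed ideals and so at most $2^\kappa$ isomorphism types. For the reverse inequality I would invoke Proposition~\ref{Bashkirov}\romanref{explambda}, which provides $2^\kappa$ pairwise non-homeomorphic compact spaces $X$ of weight $\kappa$; for each such $X$ the algebra $C(X)$ is unital of density equal to the weight of $X$, namely $\kappa$. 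Since a Banach-algebra isomorphism $C(X)\cong C(Y)$ induces a homeomorphism of Gelfand spectra and the spectrum of $C(X)$ is $X$, these $2^\kappa$ algebras are pairwise non-isomorphic, matching the upper bound.

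The step I expect to be the main obstacle is the surjectivity of $\varphi$: the homomorphism has dense range for soft reasons, but turning this into an honest quotient map is precisely where the classical fact that $\ell_1(\kappa)$ surjects onto every Banach space of density $\kappa$ is needed, applied to the linear part of $\varphi$. The remaining ingredients are routine: the bookkeeping identifying word concatenation with algebra multiplication, and the cardinal arithmetic bounding the number of closed subspaces.
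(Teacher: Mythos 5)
Your proof is correct and follows essentially the same route as the paper: a contractive unital homomorphism out of $\ell_1(S_\kappa)$ determined by sending the free generators to a dense subset of the unit ball (the paper phrases this as choosing a surjective monoid homomorphism $S_\kappa\to\langle D_A\rangle$, which is the same map), surjectivity via the classical successive-approximation fact that $\ell_1(\kappa)$ is a universal quotient for density-$\kappa$ spaces, and the lower bound from Proposition~\ref{Bashkirov}\romanref{explambda} together with the Gelfand--Kolmogorov correspondence. The only divergence is bookkeeping in the upper bound, where you count closed subspaces of $\ell_1(S_\kappa)$ directly (at most $(2^\kappa)^\kappa=2^\kappa$), while the paper parametrises the quotients by homomorphisms from $S_\kappa$ to a set of cardinality $\kappa$; your count is, if anything, the more explicit of the two.
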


\begin{proof} It follows from the Gelfand--Kolmogorov theorem and Proposition~\ref{Bashkirov}(ii), that there are at least $2^\kappa$ pairwise non-isomorphic algebras of the form $C(X)$ each having density $\kappa$. Thus, it is enough to show that there are at most $2^\kappa$ non-isomorphic unital Banach algebras in total.

Denote by $S_\kappa$ the free monoid on $\kappa$ generators. Let $A$ be a unital Banach algebra of density $\kappa$ and let $D_A$ be a fixed dense subset of the unit ball $B_A$ of $A$ that has cardinality $\kappa$. The unit ball of $A$ is is a monoid with the multiplication inherited from $A$. Let $\langle D_A\rangle$ be the monoid generated by $D_A$ in $B_A$. In particular, $\langle D_A\rangle$ has cardinality $\kappa$ too. Since $S_\kappa$ is free, we may fix a~surjective monoidal homomorphism $\theta_A\colon S_\kappa\to \langle D_A\rangle$. We extend the assignment $$\vartheta_A\colon e_t\mapsto \theta_A(t)\quad (t\in S_\kappa)$$
uniquely to a linear map $h_\theta\colon c_{00}(S_\kappa)\to A$ from the semigroup algebra $c_{00}(S_\kappa)$ of $S_\kappa$, for which we can take a further unique extension to the completion $\overline{h_\theta}\colon \ell_1(S_\kappa)\to A$. Since $D_A$ is dense in the unit ball of $A$ (and in the domain we consider the $\ell_1$-norm), $\overline{h_\theta}$ is a surjective map. Moreover, it is a (continuous) algebra homomorphism. Consequently, $A\cong \ell_1(S_\kappa) / \ker \overline{h_\theta}$ as Banach algebras. Hence every unital Banach algebra  of density $\kappa$ corresponds to a closed ideal of $\ell_1(S_\kappa)$, which in turn corresponds to a homomorphism from $S_\kappa$ to a set of cardinality $\kappa$, of which there are at most $\kappa^\kappa = 2^\kappa$ many.
\end{proof}
%\begin{proposition}\label{nonSC}
%Let $E$ be a Banach space with an unconditional Schauder decomposition. Then $c_0(\mathfrak{c})$ embeds into $\mathscr{Q}(E)$. In particular, there exists no linear injection from $\mathscr{Q}(E)$ into $\ell_\infty$.\end{proposition}
%\begin{proof} Without loss of generality we may assume that $E$ has countable unconditional Schauder decomposition $(E_n)_{n=1}^\infty$; let $(P_n)_{n=1}^\infty$ be the corresponding projections. For each $n$, pick a rank-one projection $Q_n$ with range contained in $E_n$. Thus, for any subset $N\subset \mathbb N$ it makes sense to define pointwise the operator $Q_N = \sum_{n\in N} P_nQ_nP_n = \sum_{n\in N} Q_nP_n$ and for any $N,M\subseteq \mathbb N$, we have $Q_NQ_M=Q_MQ_N$. When $N$ is infinite, $Q_N$ fails to be compact. Choose an almost disjoint family $\mathscr{D}$ of subsets $\mathbb N$ (any two members have finite intersection) that has cardinality $\mathfrak{c}$. Let $q\colon \mathscr{B}(E)\to \mathscr{Q}(E)$ be the canonical quotient map. Then the family $\{q(Q_N)\colon N\in \mathscr{D}\}$ comprises pairwise orthogonal  idempotents. Moreover, the closed linear span  of $\{q(Q_N)\colon N\in \mathscr{D}\}$ is isometric to $c_0(\mathscr{D})$, in which $\{q(Q_N)\colon N\in \mathscr{D}\}\cup\{0\}$ is a non-separable weakly compact subset. Consequently, there is no linear injection from $\mathscr{Q}(E)\to \ell_\infty$ as every weakly compact subset in the latter space is separable (see also \cite[Fact 4.10]{Hajeketal}).\end{proof}
\section{Proof of Theorem 1.1}
We are now ready to prove the main result of the paper.

\begin{proof}[Proof of Theorem 1.1] Let $\lambda$ be an infinite cardinal number. By Lemma~\ref{density}, the Calkin algebra of a Banach space of density $\lambda$ may have itself density at most $\kappa = 2^\lambda$. It is thus enough to justify that there are more, in terms of cardinality, unital Banach algebras from a given class than Banach spaces of density $\lambda$ of which there are at most $\kappa$ many. Let us justify this case by case.

\begin{romanenumerate}
\item It follows from Corollary~\ref{cor:manytypes} applied to $\kappa$ that there are $\kappa^+$ many ordinals $\alpha \in [\kappa, \kappa^+)$ such the spaces $C[0,\alpha]$ are pairwise non-isomorphic as Banach spaces; every space of this form has density $\kappa$. The conclusion then follows.

 Proposition~\ref{Bashkirov}(i) assures that there are $\kappa^+$ scattered compact Fr\'echet--Urysohn spaces of weight $\kappa$, which yield $C(X)$-spaces of density $\kappa$. Proposition~\ref{Bashkirov}(iii--iv) establishes the remaining cases. 
 
 (We point out that separability of the extremely disconnected compact spaces mentioned in the last clause of (i) implies that the corresponding spaces of continuous functions are isomorphic to $\ell_\infty$, hence they have density $\mathfrak{c}$. Indeed, every separable compact space $X$ is a continuous image of $\beta \mathbb N$, the \v{C}ech--Stone compactification of the discrete space of natural numbers, so $C(X)$ is then naturally a subspace of the space $C(\beta \mathbb N)\cong \ell_\infty$. When $X$ is extremely disconnected and infinite, $C(X)$ is then injective, so complemented in $\ell_\infty$, and infinite-dimensional, hence isomorphic to $\ell_\infty$ by \cite{Lindenstrauss:1967}.)
\item By Corollary~\ref{cor:manytypes}, there are $\kappa^+$ pairwise non-isomorphic Banach spaces of the form $C[0,\alpha]$ for ordinals $\alpha<\kappa^+$, whose dual spaces in this case have density $\kappa$. Consequently, Lemma~\ref{lem:compact} applies so that the spaces of compact operators acting thereon have density $\kappa$ too. By Lemma~\ref{lem:operatoralgebra}, we conclude that there are $\kappa^+$ pairwise non-isomorphic algebras of the form $\mathscr{K}(C[0,\alpha])$, where $\alpha \in[\kappa, \kappa^+)$ is an ordinal. The conclusion extends to the unitisations of the respective algebras.
\item This follows directly from \cite{FarahKatsura:2015}, where it is proved that for every uncountable cardinal $\kappa$ there are $2^\kappa$ non-isomorphic simple AF algebras of density character $\kappa$ and the fact that two unital $C^*$-algebras are isomorphic as $C^*$-algebras if and only if they are isomorphic as Banach algebras (see \cite[Theorem~B]{Gardner}).
\item Proposition~\ref{Bashkirov}(iii) yields $2^\kappa$ pairwise non-homeomorphic compact and connected Abelian groups of weight $\kappa$. By the standard Pontryagin duality (\cite[Theorems 24.25 and 24.14]{HewittRoss}), the groups dual to compact and connected Abelian groups of weight $\kappa$ are Abelian, have cardinality $\kappa$, and are torsion-free. This yields $2^\kappa$ pairwise non-isomorphic algebras of the form $\ell_1(G)$ with $G$ having the announced properties. Indeed, a Banach-algebra isomorphism between two unital Banach algebras induces a homeomorphism between their spaces of maximal ideals, which in this case are precisely the said compact groups (\cite[Theorem 23.15]{HewittRoss}).
\end{romanenumerate}\end{proof}
\subsection*{Acknowledgements} The authors are grateful to the anonymous referee for the careful reading of the manuscript, and for the very helpful suggestions.


\begin{thebibliography}{99}

\bibitem{ArgyrosHaydon:2011} S.A. Argyros, R.G. Haydon. {A hereditarily indecomposable $\mathscr{L}_\infty$-space that solves the scalar-plus-compact problem}. \emph{Acta Math.} \textbf{206} (2011), 1--54.

\bibitem{AsadiKhosravi:2006} {M.B. Asadi} and {A. Khosravi}, {An elementary proof of the characterization of isomorphisms of standard operator algebras}, \emph{Proc. Amer. Math. Soc.} \textbf{134}, No. 11 (2006), 3255--3256.
\bibitem{Bashkirov:1980} A.I.~Bashkirov, On Fr\'echet compactifications of discrete spaces, \emph{Bull. Acad. Pol. Sci., S\'er. Sci. Math.} \textbf{28} (1980), 301--305.
\bibitem{Brooker:2013} P.A.H.~Brooker, Szlenk and $w^*$-dentability indices of the Banach spaces $C([0, \alpha])$, \emph{J. Math. Anal. Appl.} \textbf{399}, Issue 2 (2013), 559--564.
\bibitem{Chernoff:1973}  P.R. Chernoff, Representations, automorphisms, and derivations of some operator algebras, \emph{J. Funct. Anal.} \textbf{12} (1973), 275--289.
\bibitem{Dowetal:1988} A.~Dow, A.V.~Gubbi, and A. Szyma\'nski, Rigid Stone spaces with ZFC, \emph{Proc. Am. Math. Soc.} \textbf{102}, No. 3 (1988), 745--748.
\bibitem{FarahKatsura:2015} I.~Farah, T.~Katsura,  Nonseparable UHF algebras II: Classification, \emph{Math. Scand.}, \textbf{117} (2015), 105--125.
\bibitem{Gardner} L.T.~Gardner,  On isomorphisms of $C^*$-algebras, \emph{Amer. J. Math.}, \textbf{87}, No. 2 (1965), 384--396.
\bibitem{Hajeketal} P.~H\'ajek, V. Montesinos, J. Vanderwerff, and V. Zizler, \emph{Biorthogonal Systems in Banach Spaces}, CMS Books in Mathematics, Springer-Verlag, 2008.
\bibitem{HewittRoss} E.~Hewitt, K.A.~Ross, \emph{Abstract Harmonic Analysis I}, Springer, Berlin, 2nd ed., 1979.
\bibitem{KaniaLaus} T.~Kania and N.J.~Laustsen, Ideal structure of the algebra of bounded operators acting on a Banach space, \emph{Indiana Math. J.} \textbf{66}, No. 3 (2017), 1019--1043.
\bibitem{Megginson} R.E.~Megginson, \emph{An Introduction to Banach Space Theory}, Springer, New York, 1998.
\bibitem{Motakispuglisi} P.~Motakis, D.~Puglisi, and A.~Tolias, Algebras of diagonal operators of the form scalar-plus-compact are Calkin algebras, \emph{Mich. Math. J.} \textbf{69}, No. 1 (2020), 97--152.
\bibitem{motakisetal} P.~Motakis, D.~Puglisi, and D. Zisimopoulou, A hierarchy of Banach spaces with $C(K)$ Calkin algebras, \emph{Indiana Univ. Math. J.}, \textbf{65} (2016), No. 1, 39--67.
\bibitem{Lindenstrauss:1967} J.~Lindenstrauss. On complemented subspaces of $m$. \emph{Israel J. Math.}, \textbf{5} (1967), 153--156.

\bibitem{OrsattiRodino:1986} {A.~Orsatti}, {N. Rodin\`o}, Homeomorphisms between finite powers of topological spaces, \emph{Topology Appl.} \textbf{23} (1986), 271--277.
\bibitem{Pedersen} G.K.~Pedersen, \emph{Analysis Now}, Springer, New York, revised printing, 1995.
\bibitem{Rudin:1957} W.~Rudin, Continuous functions on compact spaces without perfect subsets, \emph{Proc. Amer. Math. Soc.} \textbf{8} (1957), 39--42.
\bibitem{Ryan} R.A.~Ryan, \emph{Introduction to Tensor Products of Banach Spaces}, Springer, London, 2002.
\bibitem{Samuel:1984} C.~Samuel, Indice de Szlenk des $C(K)$ ($K$ espace topologique compact d\'enombrable), in Seminar on
the geometry of Banach spaces, Vol. I, II (Paris, 1983), Publ. Math. Univ. Paris VII, \textbf{18} (1984), 81–91.
%\bibitem{Woods:76} R.G.~Woods, {Characterizations of some $C^*$-embedded subspaces of $\beta \mathbb N$}. \emph{Pac. J. Math.} \textbf{65} (1976), 573--579.

\bibitem{Tarbard:2013} M.~Tarbard, \emph{Operators on Banach Spaces of Bourgain--Delbaen Type}, St John’s College, University
of Oxford, (2013), \url{arXiv:1309.7469v1.}

\end{thebibliography}
\end{document}